 \newtheorem{thm}{Theorem}[section]
 \theoremstyle{definition}
 \theoremstyle{remark}
 \newtheorem{rem}[thm]{Remark}
\begin{document}

\title[Left or right centralizers on $ \star $-algebras]{Left or right centralizers on $ \star $-algebras through orthogonal elements }

\author{ Hamid Farhadi}
\thanks{{\scriptsize
\hskip -0.4 true cm \emph{MSC(2020)}: 15A86; 47B49; 47L10; 16W10.
\newline \emph{Keywords}: Left centralizer, right centralizer, $ \star $-algebra, orthogonal element, zero product determined, standard operator algebra.\\}}

\address{Department of Mathematics, Faculty of Science, University of Kurdistan, P.O. Box 416, Sanandaj, Kurdistan, Iran}

\email{h.farhadi@uok.ac.ir}

%%%%%%%%%%%%%%%%%%%%%%%%%%%%%%%%%%%%%%

%%%%%%%%%%%%%%%%%%%%%%%%%%%%%%%%%%%%%%%%%%%%
% Abstract  of your extended abstract
% Note: the abstract should be 200 words or less with no reference number therein.
%The speaker is responsible for the proper formatting his/her talk by using the style
%file of the booklet of abstracts.
%%%%%%%%%%%%%%%%%%%%%%%%%%%%%%%%%%%%%%%%%%%
\begin{abstract}
In this paper we consider the problem of characterizing linear maps on special $ \star $-algebras behaving like left or right centralizers at orthogonal elements and obtain some results in this regard.
\end{abstract}

%%%%%%%%%%%%%%%%%%%%%%%
\maketitle
%%%%%%%%%%%%%%%%%%%%%%%

%%%%%%%%%%%%%%%%%%%%%%%%%
% The body of your extended abstract
%%%%%%%%%%%%%%%%%%%%%%%%%

%%%%%%%%%%%%%%%%%%%%%%%%%%%%%%%%%
%% Note that The number of
%% pages of the extended abstract should have 3-4 pages. Papers prepared
%% in less than 3 pages, more than 4 pages or out of the style of the meeting will be
%% returned
%%%%%%%%%%%%%%%%%%%%%%%%%%%%%%%%%
\section{Introduction}

Throughout this paper all algebras and vector spaces will be over the complex field $ \mathbb{C} $. Let $ \mathcal{A} $ be an algebra. Recall that a linear (additive) map $ \varphi : \mathcal{A} \to \mathcal{A} $ is said to be a \textit{right $($left$)$ centralizer} if $ \varphi (ab) = a \varphi(b)
(\varphi(ab) = \varphi(a)b) $ for each $a, b \in \mathcal{A}$. The map $ \varphi $ is called a \textit{centralizer} if it is both a left centralizer and a right centralizer. In the case that $ \mathcal{A} $ has a unity $1$, $ \varphi : \mathcal{A} \to \mathcal{A} $ is a right (left) centralizer if and only if $ \varphi $ is of the form $ \varphi (a) = a \varphi(1) ( \varphi(a) = \varphi(1)a)$ for all $a \in \mathcal{A}$. Also $ \varphi $ is a centralizer if and only if $ \varphi (a) = a \varphi(1) = \varphi(1)a$ for each $a \in \mathcal{A}$. The notion of centralizer appears naturally in $C^{*}$-algebras. In ring theory it is more common to work with module homomorphisms. We refer the reader to \cite{gh1, gh2, vuk} and references therein for results concerning centralizers on rings and algebras.

In recent years, several authors studied the linear (additive) maps that behave like homomorphisms, derivations or right (left) centalizers when acting on special products (for instance, see \cite{barar, bre, fad0, fad1, fad2} and the references therein). An algebra $ \mathcal{A} $ is called \textit{zero product determined} if for every linear space $\mathcal{X}$ and every bilinear map $\phi:\mathcal{A}\times \mathcal{A}\rightarrow \mathcal{X}$ the following holds: If $\phi(a,b)=0$ whenever $ab=0$, then there exists a linear map $T : \mathcal{A}^{2}\rightarrow \mathcal{X}$ such that $\phi(a,b)=T(ab)$ for each $a,b\in \mathcal{A}$. If $\mathcal{A}$ has unity $1$, then $\mathcal{A}$ is zero product determined if and only if for every linear space $\mathcal{X}$ and every bilinear map $\phi:\mathcal{A}\times \mathcal{A}\rightarrow \mathcal{X}$, the following holds: If $\phi(a,b)=0$ whenever $ab=0$, then $\phi(a,b)=\phi(ab,1)$ for each $a,b\in \mathcal{A}$. Also in this case $\phi(a,1)=\phi(1,a)$ for all $a\in \mathcal{A}$. The question of characterizing linear maps through zero products, Jordan products, etc. on algebras sometimes can be effectively solved by considering bilinear maps that preserve certain zero product properties (for instance, see \cite{al, al1, fos, gh3,gh4,gh5,gh6, gh7, gh8}). Motivated by these works, Bre\v{s}ar et al. \cite{bre2} introduced the concept of zero product (Jordan product) determined algebras, which can be used to study linear maps preserving zero products (Jordan products) and derivable (Jordan derivable) maps at zero point.  
\par
Let $ \varphi : \mathcal{A} \to   \mathcal{A} $ be a linear mapping on algebra $  \mathcal{A} $. A tempting
challenge for researchers is to determine conditions on a certain set $ \mathcal{S} \subseteq  \mathcal{A} \times \mathcal{A} $ to guarantee that the property
\begin{equation} \label{1}
\varphi (ab) = a \varphi(b)\quad   \big (\varphi(ab) = \varphi(a)b\big), \text{ for every } (a, b) \in  \mathcal{S} , 
\end{equation} 
implies that $ \varphi $ is a (right, left) centralizer. Some particular subsets $ \mathcal{S} $ give rise to precise
notions studied in the literature. For example, given a fixed element $z \in \mathcal{A}$, a
linear map $ \varphi : \mathcal{A} \to   \mathcal{A} $ satisfying \eqref{1} for the set $\mathcal{S}_{z} = \{ (a, b) \in  \mathcal{A} \times \mathcal{A} : ab = z \} $ is called \textit{centralizer} at $z$. Motivated by \cite{barar, fad1, fad2, gh7, gh8} in this paper we consider the problem of characterizing linear maps on special $ \star $-algebras behaving like left or right centralizers at orthogonal elements for several types of orthogonality conditions.
\par 
In this paper we consider the problem of characterizing linear maps behaving like right or left centralizers at orthogonal elements for several types of orthogonality conditions on $ \star $-algebras with unity. In particular, in this paper we consider the subsequent conditions on a linear map $ \varphi :  \mathcal{A} \to \mathcal{A}  $ where $ \mathcal{A} $ is a zero product determined $ \star $-algebra with unity or $ \mathcal{A} $ is a unital standard operator algebras on a Hilbert space $H$ such that $ \mathcal{A} $ is closed under adjoint operation : 
\[   a, b \in \mathcal{A} , a b^\star =0 \Longrightarrow a \varphi(b)^\star = 0  ; \]
\[  a, b \in \mathcal{A} , a^\star b =0 \Longrightarrow   \varphi(a)^\star b = 0. \]
Let $H$ be a Hilbert space. We denote by $B(H)$ the algebra of all bounded linear operators on $H$, and $F(H)$ denotes the algebra of all finite rank operators in $B(H)$. Recall that a \textit{standard operator algebra} is any subalgebra of $B(H)$ which contains $F(H)$. We shall denote the identity matrix of $B(H)$ by $I$. 
% Next section
%%%%%%%%%%%%%%%%%%%%%%%%%%%

%%%-----------------------------------------------------------------------

\section{Main results}
We first characterize the centralizers at orthogonal elements on unital zero product determined $ \star $-algebras.

\begin{thm} \label{tc}
Let $ A $ be a zero product determined $ \star $-algebra with unity $1$ and $ \varphi : A \to A $ be a linear map. Then the following conditions are equivalent:
\begin{enumerate}
\item[(i)]
$\varphi$ is a left centralizer;
\item[(ii)]
$  a, b \in \mathcal{A} , a b^\star =0 \Longrightarrow a \varphi(b)^\star = 0  $.
\end{enumerate}
\end{thm}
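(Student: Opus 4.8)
The plan is to prove the two implications separately, with $(ii)\Rightarrow(i)$ doing essentially all the work; the implication $(i)\Rightarrow(ii)$ is immediate from the unital description of left centralizers.

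\smallskip

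\emph{Proof that $(i)\Rightarrow(ii)$.} Since $\mathcal{A}$ has a unity and $\varphi$ is a left centralizer, $\varphi(a)=\varphi(1)a$ for every $a\in\mathcal{A}$. Hence, if $a,b\in\mathcal{A}$ satisfy $ab^\star=0$, then using that $\star$ is conjugate-linear and anti-multiplicative we get $a\varphi(b)^\star=a\bigl(\varphi(1)b\bigr)^\star=ab^\star\varphi(1)^\star=0$. This settles the first direction.

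\smallskip

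\emph{Proof that $(ii)\Rightarrow(i)$.} The first step is to recast the hypothesis, which involves adjoints, as a genuine (complex-)linear zero-product condition. Define $\psi:\mathcal{A}\to\mathcal{A}$ by $\psi(x)=\varphi(x^\star)^\star$; since $\star$ is conjugate-linear, $\psi$ is linear. Applying $(ii)$ with $b=x^\star$ (so that $b^\star=x$) shows that for all $a,x\in\mathcal{A}$,
\[
ax=0\ \Longrightarrow\ a\psi(x)=a\varphi(x^\star)^\star=0 .
\]
Now consider the bilinear map $\Phi:\mathcal{A}\times\mathcal{A}\to\mathcal{A}$ given by $\Phi(a,x)=a\psi(x)$. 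By the displayed implication, $\Phi(a,x)=0$ whenever $ax=0$, so since $\mathcal{A}$ is a unital zero product determined algebra we may invoke the unital reformulation from the Introduction to conclude $\Phi(a,x)=\Phi(ax,1)$ for all $a,x\in\mathcal{A}$, that is, $a\psi(x)=(ax)\psi(1)$. Taking $a=1$ yields $\psi(x)=x\psi(1)$ for every $x\in\mathcal{A}$, i.e. $\psi$ is a right centralizer.

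\smallskip

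It remains to translate this back to $\varphi$. From $\varphi(x^\star)^\star=x\psi(1)$, applying $\star$ to both sides gives $\varphi(x^\star)=\psi(1)^\star x^\star$; since $x\mapsto x^\star$ is a bijection of $\mathcal{A}$, replacing $x^\star$ by an arbitrary $y\in\mathcal{A}$ gives $\varphi(y)=\psi(1)^\star y$ for all $y$. Therefore $\varphi(ab)=\psi(1)^\star ab=\bigl(\psi(1)^\star a\bigr)b=\varphi(a)b$, so $\varphi$ is a left centralizer (with $\varphi(1)=\psi(1)^\star$), completing the proof.

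\smallskip

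The only genuinely delicate point is the bookkeeping with $\star$: one must check that the substitution $b=x^\star$ turns the condition $ab^\star=0$ into a plain zero product $ax=0$, and that the resulting map $\psi$ is linear rather than conjugate-linear, so that $\Phi$ is bilinear and the zero product determined hypothesis is applicable. Once this is set up correctly, the rest is routine.
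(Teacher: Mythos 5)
Your proof is correct and follows essentially the same route as the paper: the auxiliary map $\psi(x)=\varphi(x^\star)^\star$ composed with the bilinear map $\Phi(a,x)=a\psi(x)$ is exactly the paper's bilinear map $\phi(a,b)=a\varphi(b^\star)^\star$, and both arguments then invoke the unital reformulation of the zero product determined property and set $a=1$. The only difference is organizational (you factor through $\psi$ explicitly), so no further comment is needed.
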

\begin{proof}
$ (i) \Rightarrow (ii) $ Since $\mathcal{A}$ is unital, it follows that $\varphi(a) = \varphi(1)a$ for each $a\in \mathcal{A}$. If $ a b^\star =0$, then 
\[a \varphi(b)^\star=a(\varphi(1)b)^{\star}=a b^\star\varphi(1)^{\star} =0. \]
So (ii) holds. \\
$ (ii) \Rightarrow (i) $ Define $\phi:\mathcal{A}\times \mathcal{A}\rightarrow \mathcal{A}$ by $\phi(a,b)=a\varphi(b^{\star})^{\star}$. It is easily checked that $\phi$ is a bilinear map. If $a,b\in \mathcal{A}$ such that $ab=0$, then $a(b^{\star})^{\star}=0$. It follows from hypothesis that $a\varphi(b^{\star})^{\star}=0$. Hence $\phi(a,b)=0$. Since $\mathcal{A}$ is a zero product determined algebra, it follows that $\phi(a,b)=\phi(ab,1)$ for each $a,b\in \mathcal{A}$. Now we have 
\[ a\varphi(b^{\star})^{\star}=ab\varphi(1)^{\star}\] 
for each $a,b\in \mathcal{A}$. By letting $a=1$ we get
\[\varphi(b^{\star})^{\star}=b\varphi(1)^{\star} \] 
for each $b\in \mathcal{A}$. Thus $\varphi(b^{\star})=\varphi(1)b^{\star}$ for all $b\in \mathcal{A}$ and hence $\varphi(a)=\varphi(1)a$ for all $a\in \mathcal{A}$. Hence $\varphi$ is a left centralizer.
\end{proof}
\begin{thm} \label{tc2}
Let $ A $ be a zero product determined $ \star $-algebra with unity $1$ and $ \varphi : A \to A $ be a linear map. Then the following conditions are equivalent:
\begin{enumerate}
\item[(i)]
$\varphi$ is a right centralizer;
\item[(ii)]
$  a, b \in \mathcal{A} , a^\star b =0 \Longrightarrow   \varphi(a)^\star b = 0 $.
\end{enumerate}
\end{thm}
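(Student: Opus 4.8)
The plan is to mimic the proof of Theorem~\ref{tc}, but now working with the other one-sided product and passing to the transpose/adjoint picture appropriately. For the easy direction $(i)\Rightarrow(ii)$: since $\mathcal{A}$ is unital, being a right centralizer means $\varphi(a)=a\varphi(1)$ for all $a\in\mathcal{A}$. Then if $a^\star b=0$, I compute $\varphi(a)^\star b=(a\varphi(1))^\star b=\varphi(1)^\star a^\star b=0$, so $(ii)$ holds. This is a routine one-line calculation.

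For the substantive direction $(ii)\Rightarrow(i)$, I would define a bilinear map $\phi:\mathcal{A}\times\mathcal{A}\to\mathcal{A}$ designed so that the hypothesis $(ii)$ translates into "$\phi$ vanishes on zero products". The natural candidate here is $\phi(a,b)=\varphi(a^\star)^\star b$: it is bilinear, and if $ab=0$ then writing $a=(a^\star)^\star$ we have $(a^\star)^\star b=0$, so by hypothesis $(ii)$ (applied with the element $a^\star$ in the role of "$a$") we get $\varphi(a^\star)^\star b=0$, i.e. $\phi(a,b)=0$. Since $\mathcal{A}$ is zero product determined and unital, this forces $\phi(a,b)=\phi(ab,1)$, that is $\varphi(a^\star)^\star b=\varphi((ab)^\star)^\star$ for all $a,b$. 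Setting $b=1$ gives $\varphi(a^\star)^\star=\varphi(a^\star)^\star$, which is vacuous, so instead I should exploit the relation differently: put $a=1$ to obtain $\varphi(1)^\star b=\varphi(b^\star)^\star$ for all $b\in\mathcal{A}$. Taking adjoints, $\varphi(b^\star)=b^\star\varphi(1)$, and since $b^\star$ ranges over all of $\mathcal{A}$ this says $\varphi(c)=c\varphi(1)$ for every $c\in\mathcal{A}$, which is exactly the statement that $\varphi$ is a right centralizer.

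The only point requiring care — and the analogue of the one subtle spot in Theorem~\ref{tc} — is making sure the hypothesis $(ii)$ is invoked with the correctly starred element, since $(ii)$ reads $a^\star b=0\Rightarrow\varphi(a)^\star b=0$; feeding in $a^\star$ for $a$ turns the premise into $(a^\star)^\star b=ab=0$ and the conclusion into $\varphi(a^\star)^\star b=0$, which is precisely what is needed to conclude $\phi(a,b)=0$. Once that bookkeeping is set up correctly, everything else is the standard zero-product-determined machinery recalled in the introduction, and there is no real obstacle; the proof is essentially a mirror image of the previous one with left and right multiplication interchanged.
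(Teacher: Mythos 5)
Your proof is correct and takes essentially the same approach as the paper: the same bilinear map $\phi(a,b)=\varphi(a^{\star})^{\star}b$, the same verification that it vanishes on zero products, and the same appeal to the zero-product-determined property. The only cosmetic difference is that you specialize $a=1$ in $\phi(a,b)=\phi(ab,1)$, whereas the paper uses $\phi(ab,1)=\phi(1,ab)=\varphi(1)^{\star}ab$ and then sets $b=1$; both routes give $\varphi(a)=a\varphi(1)$.
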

\begin{proof}
$ (i) \Rightarrow (ii) $ Since $\mathcal{A}$ is unital, it follows that $\varphi(a) = a\varphi(1)$ for each $a\in \mathcal{A}$. If $ a^\star b=0$, then 
\[ \varphi(a)^\star b=(a\varphi(1))^{\star}= \varphi(1)^{\star}a^\star b=0. \]
So (ii) holds. \\
$ (ii) \Rightarrow (i) $ Define the bilinear map $\phi:\mathcal{A}\times \mathcal{A}\rightarrow \mathcal{A}$ by $\phi(a,b)=\varphi(a^{\star})^{\star}b$. If $a,b\in \mathcal{A}$ such that $ab=0$, then $(a^{\star})^{\star}b=0$. By hypothesis $\varphi(a^{\star})^{\star}b=0$. So $\phi(a,b)=0$. Since $\mathcal{A}$ is a zero product determined algebra, it follows that $\phi(a,b)=\phi(ab,1)=\phi(1,ab)$ for each $a,b\in \mathcal{A}$. Now 
\[ \varphi(a^{\star})^{\star}b=\varphi(1)^{\star}ab\] 
for each $a,b\in \mathcal{A}$. By letting $b=1$ we arrive at
\[\varphi(a^{\star})^{\star}=\varphi(1)^{\star}a \] 
for each $a\in \mathcal{A}$. Thus $\varphi(a^{\star})=a^{\star}\varphi(1)$ for all $a\in \mathcal{A}$ and hence $\varphi(a)=a\varphi(1)$ for all $a\in \mathcal{A}$, giving us $\varphi$ is a right centralizer.
\end{proof}
\begin{rem}
Every algebra which is generated by its idempotents is zero product determined \cite{bre3}. So the following algebras are zero product determined: 
\begin{itemize}
\item[(i)] Any algebra which is linearly spanned by its idempotents.
By \cite[Lemma 3. 2]{hou} and \cite[Theorem 1]{pe}, $B(H)$ is linearly spanned by its idempotents. By \cite[Theorem 4]{pe}, every element in a properly infinite $W^*$-algebra $\mathcal{A}$ is a sum of at most five idempotents. In \cite{mar} several classes of simple $C^*$-algebras are given which are linearly spanned by their projections. 
\item[(ii)] Any simple unital algebra containing a non-trivial idempotent, since these algebras are generated by their idempotents \cite{bre}. 
\end{itemize}
Therefore Theorems \ref{tc} and \ref{tc2} hold for $\star$-algebras that satisfy one of the above conditions.
\end{rem}
In the following, we will characterize the the centralizers at orthogonal elements on the unital standard operator
algebras on Hilbert spaces that are closed under adjoint operation.
\begin{thm}\label{s1}
Let $\mathcal{A}$ be a unital standard operator algebra on a Hilbert space $H$ with $dimH \geq 2$, such that $\mathcal{A}$ is closed under adjoint operation. Suppose that $ \varphi : A \to A $ is a linear map. Then the following conditions are equivalent:
\begin{enumerate}
\item[(i)]
$\varphi$ is a left centralizer;
\item[(ii)]
$  A,B \in \mathcal{A} , AB^\star =0 \Longrightarrow A \varphi(B)^\star = 0  $.
\end{enumerate}
\end{thm}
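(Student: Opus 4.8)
The implication $(i)\Rightarrow(ii)$ is established exactly as in Theorem~\ref{tc}: since $\mathcal{A}$ is unital we have $\varphi(A)=\varphi(1)A$ for all $A$, whence $AB^{\star}=0$ gives $A\varphi(B)^{\star}=A(\varphi(1)B)^{\star}=AB^{\star}\varphi(1)^{\star}=0$. So the content is $(ii)\Rightarrow(i)$, and there I would not try to invoke the zero product determined property (a general standard operator algebra need not be covered by the Remark above) but instead argue directly with rank one operators. For $x,\xi\in H$ let $x\otimes\xi$ denote the rank one operator $z\mapsto\langle z,\xi\rangle x$; then $x\otimes\xi\in F(H)\subseteq\mathcal{A}$, and a direct check gives $(x\otimes\xi)B^{\star}=x\otimes(B\xi)$ for every $B$. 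Substituting $A=x\otimes\xi$ with $x\neq0$ into (ii), the premise $AB^{\star}=0$ becomes $B\xi=0$ and the conclusion $A\varphi(B)^{\star}=0$ becomes $\varphi(B)\xi=0$. Thus the only information I would extract from (ii) is the kernel inclusion
\[
\ker B\subseteq\ker\varphi(B)\qquad\text{for every }B\in\mathcal{A}.
\]

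Next I would feed this back into rank one operators. Fix $x\neq 0$ and $y\neq 0$. Since $\ker(x\otimes y)=\{y\}^{\perp}$, the operator $\varphi(x\otimes y)$ kills $\{y\}^{\perp}$ and hence has the form $\varphi(x\otimes y)=(T_{y}x)\otimes y$ for a uniquely determined $T_{y}x\in H$; using linearity of $\varphi$ in the first variable one sees $T_{y}\colon H\to H$ is linear. The key claim is that $T_{y}$ is independent of $y$. That $T_{\lambda y}=T_{y}$ for $\lambda\neq 0$ is immediate from conjugate-linearity of $\xi\mapsto x\otimes\xi$; and for linearly independent $y_{1},y_{2}$, expanding $\varphi\big(x\otimes(y_{1}+y_{2})\big)$ in two ways yields
\[
(T_{y_{1}}x-T_{y_{1}+y_{2}}x)\otimes y_{1}=(T_{y_{1}+y_{2}}x-T_{y_{2}}x)\otimes y_{2},
\]
and since a rank one operator $u\otimes y_{1}$ with $u\neq0$ cannot equal any $v\otimes y_{2}$ when $y_{1},y_{2}$ are linearly independent, both sides vanish, giving $T_{y_{1}}=T_{y_{1}+y_{2}}=T_{y_{2}}$. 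Because $\dim H\geq2$, every two nonzero vectors are joined by such comparisons, so all the $T_{y}$ equal a single linear map $T$, and $\varphi(x\otimes y)=T(x\otimes y)$ for every rank one operator; by linearity $\varphi(R)=TR$ for all $R\in F(H)$.

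Finally I would pass from $F(H)$ to $\mathcal{A}$. Given $A\in\mathcal{A}$ and a unit vector $p$, put $P=p\otimes p$, so $(I-P)p=0$ and hence $A(I-P)p=0$; by the kernel inclusion $\varphi\big(A(I-P)\big)p=0$. Since $AP=(Ap)\otimes p\in F(H)$ we have $\varphi(AP)=T(AP)$, so evaluating at $p$ gives $0=\varphi(A)p-\varphi(AP)p=\varphi(A)p-TAp$. As $p$ ranges over all unit vectors this forces $\varphi(A)=TA$ for every $A\in\mathcal{A}$; taking $A=1=I$ gives $T=\varphi(1)$, and therefore $\varphi(A)=\varphi(1)A$ for all $A$, i.e.\ $\varphi$ is a left centralizer. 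I expect the middle step — proving that $T_{y}$ does not depend on $y$ — to be the real obstacle: it must be carried out purely algebraically since no continuity of $\varphi$ is assumed, and it is precisely here that the hypothesis $\dim H\geq2$ enters; the reduction to the kernel inclusion and the final extension to $\mathcal{A}$ are then routine manipulations with rank one operators and the unit.
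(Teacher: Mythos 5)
Your argument is correct, and it takes a genuinely different route from the paper's. The paper first conjugates by the involution, setting $\psi(A)=\varphi(A^{\star})^{\star}$, so that (ii) becomes the zero-product condition $AB=0\Rightarrow A\psi(B)=0$; it then derives $\psi(PA)=P\psi(A)$ for rank-one idempotents $P$ from $P(I-P)A=0$ and $(I-P)PA=0$, invokes the Burgos--Ortega lemma that every finite-rank operator is a linear combination of rank-one idempotents to get $\psi(XA)=X\psi(A)$ for all $X\in F(H)$, and concludes by comparing with $\psi(XA)=XA\psi(I)$ and using that $F(H)$ is an essential ideal. You never form $\psi$: you test (ii) only against rank-one operators $A=x\otimes\xi$, reduce the hypothesis to the kernel inclusion $\ker B\subseteq\ker\varphi(B)$, reconstruct $\varphi$ on rank ones as left multiplication by a single linear map $T$ (your independence-of-$y$ argument, forcing both sides of the displayed identity to vanish when the co-vectors are linearly independent, is sound and correctly avoids any continuity assumption), extend to all of $\mathcal{A}$ via the projection $P=p\otimes p$ together with the kernel inclusion applied to $A(I-P)$, and identify $T=\varphi(I)$. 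What your route buys: it is self-contained (no appeal to the Burgos--Ortega lemma or to zero-product-determined machinery), it shows that the formally weaker condition $\ker B\subseteq\ker\varphi(B)$ for all $B\in\mathcal{A}$ already forces $\varphi$ to be a left centralizer, and, since you only ever apply $\varphi$ to elements of $\mathcal{A}$ and never need $B^{\star}\in\mathcal{A}$, it does not actually use the hypothesis that $\mathcal{A}$ is closed under the adjoint operation, which the paper's construction of $\psi$ does require. What the paper's route buys: it is shorter modulo the cited lemma, and it runs parallel to the zero-product arguments of Theorems~\ref{tc} and~\ref{tc2}, which is the unifying theme of the paper.
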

\begin{proof}
$ (i) \Rightarrow (ii) $ is similar to proof of Theorem \ref{tc}.\\
$ (ii) \Rightarrow (i) $ Define $\psi :\mathcal{A} \rightarrow \mathcal{A}$ by $\psi(A)=\varphi(A^{\star})^{\star}$. Then $\psi$ is a linear map such that 
\[ A,B \in \mathcal{A} , AB=0 \Longrightarrow A \psi(B) = 0. \]
Let $P\in \mathcal{A}$ be an idempotent operator of rank one and $P\in \mathcal{A}$. Then $P (I-P)A = 0$ and $(I-P)P A= 0$, and by assumption, we have 
\[P\psi(A)=P\psi(PA) \quad \text{and} \quad \psi(PA)=P\psi(PA) \]
So $\psi(PA)=P\psi(A)$ for all $A\in \mathcal{A}$. By \cite[Lemma 1.1]{bur}, every element $X \in F(H )$ is a linear combination of rank-one idempotents, and so 
\begin{equation}\label{e1}
\psi(XA)=X\psi(A)
\end{equation}
for all $X \in F(H )$ and $A\in \mathcal{A}$. By letting $A=I$ in \eqref{e1} we get $\psi(X)=X\psi(I)$ for all $X \in F(H )$. Since $F(H)$ is an ideal in $\mathcal{A}$, it follows that 
\begin{equation}\label{e2}
\psi(XA)=XA\psi(I)
\end{equation}
for all $X \in F(H )$. By comparing \eqref{e1} and \eqref{e2}, we see that $X\psi(A)=XA\psi(I)$ for all $X \in F(H )$ and $A\in \mathcal{A}$. Since $F(H)$ is an essential ideal in $B(H)$, it follows that $\psi(A)=A\psi(I)$ for all $A\in \mathcal{A}$. From definition of $\psi$ we have $\varphi(A^{\star})^{\star}=A\varphi(I)^{\star}$ for all $A\in \mathcal{A}$. Thus $\varphi(A^{\star})=\varphi(I)A^{\star}$ for all $A\in \mathcal{A}$ and hence $\varphi(A)=\varphi(I)A$ for all $A\in \mathcal{A}$. Thus $\varphi$ is a left centralizer.
\end{proof}
%%%%%%%%%%%%%%%%%%%%%%%%%%%%%%%%%%%%%%%%%%%%%%%%%%%%%%%%%
\begin{thm}\label{s2}
Let $\mathcal{A}$ be a unital standard operator algebra on a Hilbert space $H$ with $dimH \geq 2$, such that $\mathcal{A}$ is closed under adjoint operation. Suppose that $ \varphi : A \to A $ is a linear map. Then the following conditions are equivalent:
\begin{enumerate}
\item[(i)]
$\varphi$ is a right centralizer;
\item[(ii)]
$  A,B \in \mathcal{A} , A^\star B =0 \Longrightarrow   \varphi(A)^\star B = 0 $.
\end{enumerate}
\end{thm}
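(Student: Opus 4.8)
The plan is to mirror the proof of Theorem \ref{s1}, interchanging the roles of left and right multiplication throughout.

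The implication $(i)\Rightarrow(ii)$ is immediate and parallels the easy direction of Theorem \ref{tc2}: since $\mathcal{A}$ is unital we have $\varphi(A)=A\varphi(I)$ for every $A\in\mathcal{A}$, so $A^\star B=0$ gives $\varphi(A)^\star B=(A\varphi(I))^\star B=\varphi(I)^\star A^\star B=0$.

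For $(ii)\Rightarrow(i)$, I would introduce the map $\psi:\mathcal{A}\to\mathcal{A}$, $\psi(A)=\varphi(A^\star)^\star$, which is linear because the two conjugations cancel the conjugate-linearity of $A\mapsto\varphi(A^\star)$. Replacing $A$ by $A^\star$ in hypothesis (ii) shows that $\psi$ satisfies the implication
\[
A,B\in\mathcal{A},\quad AB=0\ \Longrightarrow\ \psi(A)B=0 .
\]
Note that, compared with Theorem \ref{s1}, it is now the \emph{left} slot that is modified, so the remaining argument is the transpose of the one there. First I would fix a rank-one idempotent $P\in F(H)\subseteq\mathcal{A}$ and an arbitrary $A\in\mathcal{A}$; from $\bigl(A(I-P)\bigr)P=0$ and $(AP)(I-P)=0$ together with the displayed implication and the linearity of $\psi$, I would deduce $\psi(A)P=\psi(AP)P$ and $\psi(AP)=\psi(AP)P$, and hence $\psi(AP)=\psi(A)P$. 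Next, since by \cite[Lemma 1.1]{bur} every $X\in F(H)$ is a linear combination of rank-one idempotents, this upgrades to $\psi(AX)=\psi(A)X$ for all $X\in F(H)$ and $A\in\mathcal{A}$; taking $A=I$ gives $\psi(X)=\psi(I)X$ for $X\in F(H)$. Using that $F(H)$ is an ideal of $\mathcal{A}$, I get $\psi(AX)=\psi(I)AX$ for $X\in F(H)$ and $A\in\mathcal{A}$, so comparing the two expressions yields $\bigl(\psi(A)-\psi(I)A\bigr)X=0$ for all $X\in F(H)$; since $F(H)$ is an essential ideal of $B(H)$, this forces $\psi(A)=\psi(I)A$ for every $A$. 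Finally, unwinding the definition, $\varphi(A^\star)^\star=\psi(A)=\psi(I)A=\varphi(I)^\star A$, so $\varphi(A^\star)=A^\star\varphi(I)$ and hence $\varphi(A)=A\varphi(I)$ for all $A\in\mathcal{A}$, i.e. $\varphi$ is a right centralizer.

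I do not expect a genuine obstacle here, since the argument is the exact transpose of Theorem \ref{s1}. The only point requiring care is the adjoint bookkeeping: hypothesis (ii) constrains $\varphi(A)^\star B$ rather than $A\varphi(B)^\star$, so one must check that it produces the \emph{left}-sided implication $AB=0\Rightarrow\psi(A)B=0$, and correspondingly that the essential-ideal step is applied to right multiplication by finite-rank operators (the right annihilator of $F(H)$ in $B(H)$ being zero) rather than to left multiplication as in Theorem \ref{s1}.
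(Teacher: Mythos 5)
Your proposal is correct and follows essentially the same route as the paper: the substitution $\psi(A)=\varphi(A^\star)^\star$, the rank-one idempotent computation giving $\psi(AP)=\psi(A)P$, the passage to all of $F(H)$ via \cite[Lemma 1.1]{bur}, and the essential-ideal argument to conclude $\psi(A)=\psi(I)A$ are exactly the paper's steps. The only difference is that you spell out the intermediate identities $\psi(A)P=\psi(AP)P$ and $\psi(AP)=\psi(AP)P$, which the paper leaves implicit.
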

\begin{proof}
$ (i) \Rightarrow (ii) $ is similar to proof of Theorem \ref{tc2}.\\
$ (ii) \Rightarrow (i) $ Define $\psi :\mathcal{A} \rightarrow \mathcal{A}$ by $\psi(A)=\varphi(A^{\star})^{\star}$. Then $\psi$ is a linear map such that 
\[ A,B \in \mathcal{A} , AB=0 \Longrightarrow  \psi(A)B = 0. \]
Let $P\in \mathcal{A}$ be an idempotent operator of rank one and $P\in \mathcal{A}$. Then $AP (I-P) = 0$ and $A(I-P)P = 0$, and by assumption, we arrive at $\psi(AP)=\psi(A)P$ for all $A\in \mathcal{A}$. So 
\begin{equation}\label{e3}
\psi(AX)=\psi(A)X
\end{equation}
for all $X \in F(H )$ and $A\in \mathcal{A}$. By letting $A=I$ in \eqref{e3} we have $\psi(X)=\psi(I)X$ for all $X \in F(H )$. Since $F(H)$ is an ideal in $\mathcal{A}$, it follows that 
\begin{equation}\label{e4}
\psi(AX)=\psi(I)AX
\end{equation}
for all $X \in F(H )$. By comparing \eqref{e3} and \eqref{e4}, we get $\psi(A)X=\psi(I)AX$ for all $X \in F(H )$ and $A\in \mathcal{A}$. Since $F(H)$ is an essential ideal in $B(H)$, it follows that $\psi(A)=\psi(I)A$ for all $A\in \mathcal{A}$. From definition of $\psi$ we have $\varphi(A^{\star})^{\star}=\varphi(I)^{\star}A$ for all $A\in \mathcal{A}$. Thus $\varphi(A^{\star})=A^{\star}\varphi(I)$ for all $A\in \mathcal{A}$ and hence $\varphi(A)=A\varphi(I)$ for all $A\in \mathcal{A}$ implying that  $\varphi$ is a right centralizer.
\end{proof}
Finally, we note that the characterization of left or right centralizers through orthogonal elements can be used to study local left or right centralizers.
%%%%%%%%%%%%%%%%%%%%%%%%%%%%%%%%%%%%%%%%%%%
% Please cite your relevant papers but at most total 5 papers/books.
%%%%%%%%%%%%%%%%%%%%%%%%%%%%%%%%%%%%%%%%%%%
\bibliographystyle{amsplain}
\bibliography{xbib}

\end{document}